\documentclass[10pt]{amsart}

\usepackage[colorlinks]{hyperref}
\usepackage{color,graphicx,shortvrb}
\usepackage[latin 1]{inputenc}
\usepackage{multicol}

\usepackage[active]{srcltx} 

\usepackage{enumerate}

\newtheorem{theorem}{Theorem}[section]

\newtheorem{corollary}[theorem]{Corollary}

\newtheorem{lemma}[theorem]{Lemma}

\newtheorem{problem}[theorem]{Problem}
\newtheorem{proposition}[theorem]{Proposition}
\newtheorem{remark}[theorem]{Remark}

\newtheorem{example}[theorem]{Example}

\def\J#1#2#3{ \left\{ #1,#2,#3 \right\} }
\def\RR{{\mathbb{R}}}

\def\11{\textbf{$1$}}
\def\CC{{\mathbb{C}}}
\def\HH{{\mathbb{H}}}

\DeclareGraphicsExtensions{.jpg,.pdf,.png,.eps}



\usepackage{enumerate}

\begin{document}

\title[Real Local triple derivations]{Local triple derivations on real C$^*$-algebras and JB$^*$-triples}
\date{}

\author[Fern\'{a}ndez-Polo]{Francisco J. Fern\'{a}ndez-Polo}
\email{pacopolo@ugr.es}
\address{Departamento de An{\'a}lisis Matem{\'a}tico, Facultad de
Ciencias, Universidad de Granada, 18071 Granada, Spain.}

\author[Molino]{Alexis Molino}
\email{alexis.molino@gmail.com}
\address{Departamento de An{\'a}lisis Matem{\'a}tico, Facultad de
Ciencias, Universidad de Granada, 18071 Granada, Spain.}

\author[A.M. Peralta]{Antonio M. Peralta}
\email{aperalta@ugr.es}
\address{Departamento de An{\'a}lisis Matem{\'a}tico, Facultad de
Ciencias, Universidad de Granada, 18071 Granada, Spain.}

\thanks{Authors partially supported by the Spanish Ministry of Economy and Competitiveness,
D.G.I. project no. MTM2011-23843, and Junta de Andaluc\'{\i}a grant FQM3737.}

\subjclass[2000]{Primary 47B47, 46L57; Secondary 17C65, 47C15, 46L05, 46L08}

\keywords{triple derivation; (continuous) local triple derivation; symmetrized triple product; C$^*$-algebra; real C$^*$-algebra; JB$^*$-triple; real JB$^*$-triple}

\date{February, 2013}

\maketitle
 \thispagestyle{empty}

\begin{abstract} We study when a local triple derivation on a real JB$^*$-triple is a triple derivation. We find an example of a (real linear) local triple derivation on a rank-one Cartan factor of type $I$ which is not a triple derivation. On the other hand, we find sufficient conditions on a real JB$^*$-triple $E$ to guarantee that every local triple derivation on $E$ is a triple derivation.
\end{abstract}

\section{Introduction}

Derivations on Banach algebras, C$^*$-algebras, and more recently on real and complex JB$^*$-triples, constitute a field of interest for the researcher in Functional Analysis and operator algebras. Derivations on C$^*$-algebras constitute one of the most studied subclasses of the class of linear operators and they have been considered since the very earliest stages of development of the theory of Banach algebras.  One of the reasons for this interest is that each $^*$-derivation on a C$^*$-algebra $A$ (respectively, each triple derivation on a JB$^*$-triple $E$) generates a one parameter group of $^*$-automorphisms on $A$ (respectively, a one parameter group of $^*$-triple-automorphisms on $E$). We recall that an (associative) \emph{derivation} on a Banach algebra $A$ is a linear mapping $D: A \to A$ satisfying $D(a b) = D(a) b + a D(b),$ for every $a,b$ in $A$. It is due to S. Sakai \cite{Sak60} that every (associative)
derivation on a C$^*$-algebra is automatically continuous; actually,  J.R. Ringrose proved that the same statement holds for every (associative)
derivation from a C$^*$-algebra $A$ to a Banach $A$-bimodule $M$ (compare \cite{Ringrose72}).\smallskip

In 1990, R.V. Kadison introduced the concept of local derivation. A linear mapping $T$ on a Banach algebra $A$ is a \emph{local derivation} if for each $a$ in $A$ there is a derivation $D_a$ on $A$ with $D_a (a) = T(a).$ Kadison's contribution on local derivations determines that every bounded local derivation on a von Neumann algebra (i.e. a C$^*$-algebra which is also a dual Banach space) is a derivation (cf. \cite{Kad90}). Concerning local derivations on C$^*$-algebras, the culminating result is due to B.E. Johnson \cite{John01}, who proved that every bounded local derivation from a C$^*$-algebra $A$ into a Banach $A$-bimodule is a derivation. Furthermore, local derivations on C$^*$-algebras are continuous even if not assumed
a priori to be so.\smallskip

In recent years, the research on local derivation gave rise to the study of local triple derivations on the wider class of JB$^*$-triples. A simple motivation relies in the fact that every C$^*$-algebra, when equipped with the ternary product $ \J xyz =\frac12 (x y^* z +z y^* x),$ lies in the wider category of JB$^*$-triples. A JB$^*$-triple is essentially a complex Banach space $E$ equipped with a triple product $\J ... : E\times E\times E \to E$ which is conjugate linear in the middle variable and symmetric and linear in the outer variables satisfying certain algebraic-analytic axioms (see Subsection 1.1 for more details). A \emph{triple derivation} on a JB$^*$-triple $E$ is a linear mapping $\delta: E\to E$ satisfying Leibnitz's rule for the triple product, that is, \begin{equation}
\label{eq Leibnitz triple} \delta \J abc = \J {\delta(a)}bc + \J a{\delta(b)}c + \J ab{\delta(c)},
\end{equation} for every $a,b,c\in E$. Inspired by the research line opened by Ringrose and Sakai, T. Barton and Y. Friedman proved that triple derivations on JB$^*$-triples are automatically continuous (cf.  \cite{BarFri}).\smallskip

Local theory for triple derivations was initiated by M. Mackey in \cite{Mack}. A \emph{local triple derivation} on a JB$^*$-triple $E$ is a linear map $T : E\to E$ such that for each $a$ in $E$ there exists a triple derivation $\delta_{a}$ on $E$ satisfying $T(a) = \delta_a (a).$ In the just mentioned paper, Mackey proves an analogous result to that of Kadison for local triple derivations, and shows that every bounded local triple derivation on a JBW$^*$-triple (i.e. a JB$^*$-triple which is also a dual Banach space) is a triple derivation. A similar result for bounded local triple derivations on unital C$^*$-algebras was established by M. Burgos, J.J. Garcés and the first and third authors of this note (see \cite{BurFerGarPe2012}). The problems and questions about triple derivations have been completely solved in the recent note \cite{BurFerPe2013}, where M. Burgos and the first and third authors of this note establish that
every continuous local triple derivation on a JB$^*$-triple is a triple derivation; moreover, local triple derivations on a JB$^*$-triple are continuous even if not assumed a priori to be so. In the same paper, the authors open the scope to the study of real-linear local triple derivations on JB$^*$-triples and to the study of local triple derivations in the wider class of real JB$^*$-triples (i.e. norm-closed real subtriples of JB$^*$-triples). In the real setting it is proved that every local triple derivation $T$ on a real JB$^*$-triple $E$ is continuous and for every $x$ in $E$ we have: $$ T \J xxx = 2 \J {T(x)}xx + \J x{T(x)}x.$$ When $E$ is a (complex) JB$^*$-triple and $T$ is complex linear, the polarisation formula $$ 8\{x,y,x\}=\sum_{k=0}^3 \sum_{j=1}^2 i^k(-1)^j \J {x+i^ky+(-1)^jz}{x+i^ky+(-1)^jz}{x+i^ky+(-1)^jz}$$ gives $$ T \J xyz =  \J {T(x)}yz + \J x{T(y)}z + \J xy{T(z)},$$ and hence $T$ is a triple derivation. In the real setting, the lacking of a polarisation formula in the terms given above, makes invalid the above argument. Similar difficulties appear in the studies of (surjective) real linear isometries between JB$^*$-triples and real JB$^*$-triples developed by Ch.-H. Chu, T. Dang, B. Russo, and B. Ventura \cite{ChuDaRuVen}, T. Dang \cite{Da}, J.M. Isidro, W. Kaup and A. Rodríguez \cite{IsKaRo95}, F.J. Fern\'andez-Polo, J. Mart\'inez and A.M. Peralta \cite{FerMarPe} and M. Apazoglou and A.M. Peralta \cite{ApazPe}. The current knowledge concerning local triple derivations on real JB$^*$-triples leads us to the following problem:

\begin{problem}\label{problem local triple derivations on real JB*-triples}{\rm \cite[Problem 2.6]{BurFerPe2013}} Is every (bounded) local triple derivation on a real JB$^*$-triple a triple derivation?
\end{problem}

In this paper we give a complete answer to the above question. Unfortunately, the real setting provides some difficulties and obstacles, and the answer to Problem \ref{problem local triple derivations on real JB*-triples} is, in general, negative (compare Example \ref{example local not derivation}). However, in our main result (Theorem \ref{t sufficient conds on general real JB*-triples}) we shall determine sufficient conditions on a real JB$^*$-triple $E$ to guarantee that every local triple derivation on $E$ is a triple derivation. Among the consequences it follows that every local triple derivation on a real C$^*$-algebra is a triple derivation. It is superfluous to say that the techniques needed in the real setting are completely different and independent from those employed in the case of complex JB$^*$-triples.

\subsection{Notation and preliminaries}

A complex Banach space, ${E}$, together with a triple product $\J ... :{E} \times {E}
\times {E} \rightarrow {E}$, which is continuous, symmetric
and linear in the outer variables and conjugate linear in the
inner one, satisfying:
\begin{enumerate}[$a)$]
\item \textit{Jordan Identity}:
$$
L(a,b) \J xyz = \J {L(a,b)x}yz - \J x{L(b,a)y}z + \J xy{L(a,b)z},
$$ for all $a,b,x,y,z\in {E}$, where $L(a,b)x:= \J abx$;
\item For each $a\in {E},$ the operator $L(a,a)$ is hermitian with non-negative spectrum
and $\Vert \J aaa \Vert =\Vert a\Vert ^{3}$,
\end{enumerate} is said to be a (complex) \emph{JB$^*$-triple}.\smallskip

Every C$^*$-algebra is a complex JB$^*$-triple with respect to the
triple product \linebreak $\J xyz = \frac{1}{2} ( x y^* z + z y^*
x)$, and in the same way every JB$^*$-algebra with respect to $\J
abc = \left( a\circ b^{*}\right) \circ c+\left( c\circ
b^{*}\right) \circ a-\left( a\circ c\right) \circ b^{*}$.
Other examples of JB$^*$-triples are given by the so-called building blocks of JB$^*$-triples. We refer to the (complex) \emph{Cartan factors} of type $1$ to $6$ defined as follows: the Cartan factor of type 1 (also denoted by $I^{\mathbb{C}}$) is the Banach space $L(H, K)$ of bounded linear operators between two complex Hilbert spaces, $H$ and $K$, where the triple product is defined by $\J xyz= 2^{-1}(xy^*z+zy^*x)$. Cartan factors of types 2 and 3 are the subtriples of $L(H)$ defined by $II^{\mathbb{C}} = \{ x\in L(H) : x=- j x^* j\} $ and $III^{\mathbb{C}} = \{ x\in L(H) : x= j x^* j\}$, respectively, where $j$ is a conjugation on $H$. A Cartan factor of type 4 or $IV$ is a complex spin factor, that is, a complex Hilbert space provided with
a conjugation $x \mapsto \overline{x}$, triple product $$\J x y z
= \left< x / y \right> z + \left< z / y \right> x - \left< x /
\bar z \right> \bar y,$$ and norm given by $\| x\|^2=\left< x / x
\right>+\sqrt {\left< x / x \right>^2-|\left< x / \overline x
\right>|^2}$. The Cartan factors of types 5 and 6 consist of matrices over the eight
dimensional complex Cayley division algebra $\mathbb{O}$; the type $VI$ is the space of all hermitian $3$x$3$ matrices over $\mathbb{O}$, while the type $V$ is the subtriple of $1$x$2$ matrices with entries in $\mathbb{O}$ (compare \cite{Loos77}, \cite{FriRu2}, \cite{DanFri87} and \cite{Ka97}).\smallskip

In 1995, J.M. Isidro, W. Kaup and A. Rodríguez introduce a class of real Banach spaces, called real JB$^*$-triples, containing all real and complex C$^*$-algebras and all (complex) JB$^*$-triples. A \emph{real JB$^*$-triple} is a norm-closed real subtriple of a (complex) JB$^*$-triple (cf. \cite{IsKaRo95}). Every real JB$^*$-triple $E$ can be also regarded as a \emph{real form} of a complex JB$^*$-triple, that is, there exist a (complex) JB$^*$-triple $E_{c}$ and a conjugate linear isometry $\tau: E_c\rightarrow E_c$ of period 2
such that $E=\{b\in E_c\::\:\tau(b)=b\}$. We can actually identify $E_{c}$ with the
complexification of $E$.\smallskip

Real forms of Cartan factors are called \emph{real Cartan factors}. W. Kaup classified all real Cartan factors in
\cite[Corollary 4.4]{Ka97} (the classification in the finite dimensional case is due to O. Loos \cite[pages 11.5-11.7]{Loos77}). Real Cartan factors as classified, up to triple isomorphisms, as follows: Let $X$ and $Y$ be two real Hilbert spaces, let $P$ and $Q$ be two Hilbert spaces over the quaternion field
$\mathbb{H}$, and finally, let $H$ be a complex Hilbert.

\begin{enumerate}
\begin{multicols}{2}

\item $I^{\RR} := \mathcal{L}(X,Y)$\vspace{0.45cm}

\item $I^{\HH} := \mathcal{L}(P,Q)$\vspace{0.45cm}

\item $II^{\CC}:=\{z\in
\mathcal{L}(H):z^*=z\}$\vspace{0.45cm}

\item $II^{\RR}:=\{x\in
\mathcal{L}(X):x^*=-x\}$\vspace{0.45cm}

\item $II^{\HH}:= \{w\in
\mathcal{L}(P):w^*=w\}$\vspace{0.45cm}

\item $III^{\RR} :=\{x\in \mathcal{L}(X): x^* =
x\}$\vspace{0.45cm}

\item $III^{\HH}\!:=\!\{w\in\!\mathcal{L}(P)\!:\! w^*\!
=\!-w\}$\vspace{0.45cm}
\end{multicols}

\item $IV^{r,s}:=E,$ where  $E=X_1 \oplus^{\ell_{_1}}
X_2$ and $X_1$,$X_2$ are closed linear subspaces, of dimensions
$r$ and $s$, of a real Hilbert space, $X$, of dimension greater or
equal to three, so that $X_2={X_1}^{\perp}$, with triple product
$$\J x y z = \left< x / y \right> z + \left< z / y \right> x -
\left< x / \bar z \right> \bar y,$$ where $\left< . / . \right>$
is the inner product in $X$ and the involution $x\to \bar x$ on
$E$ is defined by $\bar x = ( x_1,-x_2)$ for every $x=(x_1,x_2)$.
This factor is known as a \emph{real spin factor}.

\begin{multicols}{2}

\item $V^{\mathbb{O}_{0}}:=M_{1,2}(\mathbb{O}_{0})$\vspace{0.45cm}
\item $V^{\mathbb{O}}:= M_{1,2} (\mathbb{O})$\vspace{0.45cm} \item
$VI^{\mathbb{O}_{0}}:=H_3 (\mathbb{O}_{0})$\vspace{0.45cm} \item
$VI^{\mathbb{O}}:=H_3 (\mathbb{O})$\vspace{0.45cm}
\end{multicols}

\noindent where $\mathbb{O}_{0}$ is the real split Cayley algebra
over the field of the real numbers and $\mathbb{O}$ is the real
division Cayley algebra (known also as the algebra of real
division octonions). The real Cartan factors $(ix)-(xii)$ are
called \emph{exceptional real Cartan factors}.\end{enumerate}

By a \emph{generalized real Cartan factor} we shall mean a real Cartan factor or a complex Cartan factor regarded as a real JB$^*$-triple.\smallskip

A real or complex JBW$^*$-triple is a JB$^*$-triple which is also a dual Banach space. The second dual of a real or complex JB$^*$-triple is a JBW$^*$-triple (see \cite{Di86b}, \cite{IsKaRo95}). Every real or complex JBW$^*$-triple admits a unique (isometric) predual and its product is separately weak$^*$-continuous (compare \cite{BarTi} and \cite{MarPe}).\smallskip

For each element $a$ in a real or complex JB$^*$-triple, $E$, the symbol $Q(a)$ will denote the mapping on $E$ defined by $Q(a) (x) = \J axa.$\smallskip

An element $e$ in a real or complex JB$^*$-triple, $E,$ is called a tripotent whenever $\J eee = e$. Every tripotent $e \in E$ induces a decomposition of $E,$ $$ E= E_{0} (e) \oplus E_{1} (e) \oplus
E_{2} (e),$$ where
$E_{k} (e) := \{ x\in E : L(e,e)x = \frac{k}{2} x \}$ is a
subtriple of $E$ (compare \cite[Theorem 3.13]{Loos77}). The natural projection of $E$ onto
$E_{k} (e)$ will be denoted by $P_{k} (e)$. This decomposition is called the
Peirce decomposition associated with the tripotent $e$. The
following \emph{Peirce arithmetic} is satisfied: $$\J {E_0 (e)}{E_2 (e)}{E} = \J {E_2 (e)}{E_0 (e)}{E } = 0,$$ and
$$\J {E_{i}(e)}{E_{j}(e)}{E_{k}(e)} \subseteq E_{i-j+k}(e),$$ where $E_{l}(e)=0$ for every $l\neq 0,1,2.$ A tripotent $e$ in $E$ is called
\emph{minimal} when $E^{1} (e):=\{ x\in E : Q(e) (x) =x  \} = \RR e$.\smallskip

Two elements $a,b$ in a real or complex JB$^*$-triple $E$ are said to be
\emph{orthogonal} (written $a\perp b$) if $L(a,b) =0$. It is known that $a\perp b$ $\Leftrightarrow \J aab=0$ $\Leftrightarrow \J bba=0$ (cf. \cite[Lemma 1]{BurFerGarMarPe}). We shall say that two sets $A$,$B\subseteq E$ are orthogonal 
if $a\perp b$, for every $a\in A$ and $b\in B$. 
\smallskip

The rank, $r(E)$, of a real or complex JB$^*$-triple $E$, is the minimal cardinal number $r$
satisfying $card(S)\leq r$ whenever $S$ is an orthogonal subset of
$E$, i.e. $0\notin S$ and $x\perp y$ for every $x\neq y$ in $S$.\smallskip

Let $E$ be a real JB$^*$-triple. It is known that the JB$^*$-subtriple $E_a$ generated by a single element $a$ in $E$ is JB$^*$-triple isomorphic (and hence isometric) to $C_0(L,\mathbb{R})$ for some locally compact Hausdorff space $L\subseteq (0,\|a\|],$ such that $L\cup \{0\}$ is compact. It is also known the existence of a triple isomorphism $\Psi$ from $E_a$ onto $C_{0}(L,\mathbb{R}),$ satisfying $\Psi (a) (t) = t$ $(t\in L)$ (compare
\cite[Lemma 1.14]{Ka} and \cite[\S 3, page 81]{BurPeRaRu}). It follows that in a real JB$^*$-triple of rank one every norm-one element is a tripotent.\smallskip

Given two elements $a,b$ in a real or complex JB$^*$-triple, $E$, it follows from the
Jordan identity that $\delta (a,b) := L(a,b)-L(b,a)$ is a
triple derivation on $E$. An {\it inner triple derivation} on $E$ is a triple
derivation $\delta$ which can be written as finite sum of derivations of the form
$\delta(a,b)$ with $a,b\in E$, i.e.,
$$ \delta = \sum_{j=1}^{n} \delta(a_{j},b_{j}).$$ We say that $E$ satisfies the {\it inner
derivation property} (IDP) when every triple derivation on $E$ is
inner. Unfortunately, there exist examples of real and complex JB$^*$-triples which do not satisfy the IDP (compare \cite{HoMarPeRu}). When the space of all inner triple derivations on $E$ is dense in the space of all triple derivations on $E$, with respect to the strong operator topology of $L(E)$, we shall say that $E$ has the \emph{inner approximation property} (IAP for short). The advantage of the latter property being that every real or complex JB$^*$-triple has the IAP (see \cite[Theorem 4.6]{BarFri} and \cite[Theorem 5]{HoMarPeRu}).

\section{Properties of local triple derivations on real Cartan factors}

In this section we shall determine those generalized real Cartan factors $C$ such that every local triple derivation on $C$ is a triple derivation.\smallskip

Let $T: E\to E$ be a local triple derivation on a real JB$^*$-triple. Corollary 2.5 in \cite{BurFerPe2013} assures that $T\J aaa = 2\J {T(a)}aa + \J a{T(a)}a$, for every $a\in E.$ If we consider the symmetrized triple product $ <\!a,b,c\!>:=\frac13 \left(\J abc + \J cab + \J bca\right),$ which is trilinear and symmetric, a real polarisation formula 
gives that $T$ is a triple derivation of the symmetrized Jordan triple product $ <\!.,.,.\!>.$ In other words, \cite[Corollary 2.5]{BurFerPe2013} shows that every local triple derivation on a real JB$^*$-triple is a derivation of the symmetrized triple product. We shall see in this section that the reciprocal statement is true whenever $E$ is a generalized real Cartan factor. In a first step we consider real JB$^*$-triples of rank one.\smallskip

We recall now the behavior of a local triple derivation on a tripotent element. Let $T: E\to E$ be a local triple derivation on a real JB$^*$-triple and let $e$ be a tripotent in $E.$  Then, by \cite[$(6)$, page 6]{BurFerPe2013},
\begin{equation}\label{eq local triple derivations on tripotents} P_0 (e) (T(e))=0 \hbox{ and } P_2 (e) (T(e)) = - Q (e) (T(e)).
\end{equation}

\begin{proposition}\label{p local derivations in rank-one cartan factors}
Let $E$ be a real JB$^*$-triple of rank one. Every (real linear) triple derivation of the symmetrized Jordan triple
product, $T : E \to E$, is a local triple derivation.
\end{proposition}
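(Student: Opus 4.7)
The plan is to construct, for each $a\in E$, a one-term inner triple derivation $\delta_a$ with $\delta_a(a)=T(a)$. Since $E$ has rank one, every nonzero element factorises as $a=\|a\|\,e_a$ with $e_a:=a/\|a\|$ a tripotent (by the fact recalled in Subsection 1.1), so by real linearity of $T$ it suffices to produce, for each tripotent $e\in E$, a triple derivation $\delta$ with $\delta(e)=T(e)$; the case $a=0$ is trivial.

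Fix a tripotent $e$ and set $y:=T(e)$. Evaluating the symmetrised-Leibniz hypothesis at $(e,e,e)$, using $\langle e,e,e\rangle=e$ and the symmetry of $\{\cdot,\cdot,\cdot\}$ in the outer variables, one obtains
$$ y \;=\; 2\,L(e,e)\,y + Q(e)\,y. $$
The rank-one assumption forces $E_0(e)=0$ (any nonzero element of $E_0(e)$ would be orthogonal to $e$, producing an orthogonal pair of size two), so the Peirce decomposition of $y$ reduces to $y=y_1+y_2$ with $y_k\in E_k(e)$. Since $Q(e)$ vanishes on $E_1(e)$ by Peirce arithmetic and acts as an involution on $E_2(e)$, splitting $y_2=y_2^{+}+y_2^{-}$ along the $(\pm 1)$-eigenspaces of $Q(e)|_{E_2(e)}$ reduces the displayed identity to $y_2^{+}=0$; equivalently $Q(e)\,y_2=-y_2$. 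This is exactly the constraint \eqref{eq local triple derivations on tripotents} already enjoyed by any genuine triple derivation evaluated at $e$.

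Now, for any $u\in E$ the operator $\delta(u,e):=L(u,e)-L(e,u)$ is an inner triple derivation (Subsection 1.1), and a direct Peirce computation with $u=u_1+u_2^{+}+u_2^{-}$ yields
$$ \delta(u,e)(e) \;=\; L(e,e)\,u - Q(e)\,u \;=\; \tfrac{1}{2}\,u_1 + 2\,u_2^{-}. $$
Choosing $u:=2\,y_1+\tfrac{1}{2}\,y_2$ (so $u_1=2y_1$, $u_2^{-}=y_2/2$, $u_2^{+}=0$) gives $\delta(u,e)(e)=y_1+y_2=T(e)$, and $\delta_a:=\delta(u,e)$ then satisfies $\delta_a(a)=\|a\|\,\delta_a(e_a)=T(a)$, as required.

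The key rank-one-specific point is that symmetrised-Leibniz at $(e,e,e)$ alone pins $T(e)$ down to precisely the subspace $E_1(e)\oplus E_2^{-1}(e)$ onto which $u\mapsto\delta(u,e)(e)$ surjects. In higher rank the nonvanishing of $E_0(e)$ together with extra constraints from symmetrised-Leibniz at other triples would block this simple one-term recipe and force a more delicate construction.
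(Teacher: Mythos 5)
Your proof is correct and is essentially the paper's own argument: the one-term inner derivation $\delta(2P_1(e)T(e)+\tfrac12 P_2(e)T(e),e)$ you construct is exactly the operator $\frac{1}{2\|x\|}\delta(T(x)+3P_1(u)T(x),u)$ used in the paper, and both proofs verify it via the same Peirce computation. The only (minor, and welcome) difference is that you derive the constraints $P_0(e)T(e)=0$ and $Q(e)P_2(e)T(e)=-P_2(e)T(e)$ directly from the symmetrized Leibniz identity at $(e,e,e)$, whereas the paper cites them from \eqref{eq local triple derivations on tripotents}.
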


\begin{proof} We have to show that, for every $x$ in $E$, there exists a triple derivation $\delta:E \to E$ such that $T(x)=\delta (x)$. To this end, let $x$ be a non-zero element in $E$ and set $u:=\frac{x}{\|x\|}$. We notice that in a real JB$^*$-triple of rank one every norm-one element is a minimal tripotent (compare subsection 1.1). Finally, it is easily verified, via Peirce arithmetic and \eqref{eq local triple derivations on tripotents}, that the (inner) triple derivation $$\delta = \frac{1}{2 \|x\|} \delta(T(x)+3P_1(u)T(x),u),$$ 
satisfies $$\delta(x) = \frac12 \Big( \J {T(x)}uu + 3 \J { P_1 (u)T(x)}uu - \J u{T(x)}u - 3 \J u{P_1 (u)T(x)}u\Big)$$ $$=\frac12 \|x\| \Big( \J {T(u)}uu + 3 \J { P_1 (u)T(u)}uu - \J u{T(u)}u  \Big)$$ $$= P_2 (u)T(x) + P_1 (u)T(x) =T(x).$$
\end{proof}

Haplessly, we do not know if the statement in the above Proposition \ref{p local derivations in rank-one cartan factors} remains valid for every real JB$^*$-triple. That is, we do not know whether every triple derivation of the symmetrized triple product on a real JB$^*$-triple is a local triple derivation. However, Propositions \ref{p local derivations in rank-one cartan factors} and \ref{p Cartan factors of rank bigger than one} guarantee that local triple derivations and triple derivation of the symmetrized triple product on a generalized real Cartan factor are the same notions.\smallskip

We shall exhibit, in the sequel, a series of results guaranteing that in some particular generalized real Cartan factor of rank one (concretely, real Hilbert spaces regarded as real Cartan factor of type $I^{\mathbb{R}}$ and real spin factors), every local triple derivation is a triple derivation.

\begin{lemma}\label{l realtypeI} Let $E$ be a generalized real Cartan factor of type $I^{\mathbb{R}}$, and let  $T: E \rightarrow E$ be a real linear mapping. The following statements are equivalent:
   \begin{enumerate}[$(a)$] \item $T$ is a local triple derivation;
\item $T$ is a triple derivation for the symmetrized triple product;
\item $T$ is a bounded skew-symmetric operator (i.e. $T^*=-T$);
\item $T$ is a triple derivation.
\end{enumerate}
\end{lemma}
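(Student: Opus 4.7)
I would establish the cycle $(a)\Rightarrow(b)\Rightarrow(c)\Rightarrow(d)\Rightarrow(a)$. The implication $(a)\Rightarrow(b)$ has already been recorded in the discussion preceding the lemma, as a consequence of \cite[Corollary 2.5]{BurFerPe2013} together with the polarisation identity for the symmetric trilinear map $<\!\cdot,\cdot,\cdot\!>$. The implication $(d)\Rightarrow(a)$ is immediate, by taking $\delta_{a}=T$ for every $a\in E$.

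For the central step $(b)\Rightarrow(c)$ I would use the rank-one assumption to identify $E$ with a real Hilbert space whose triple product reads $\J xyz=\frac{1}{2}\bigl(\langle x,y\rangle z+\langle z,y\rangle x\bigr)$; a short computation gives the symmetrised product $<\!x,y,z\!>=\frac{1}{3}\bigl(\langle x,y\rangle z+\langle y,z\rangle x+\langle x,z\rangle y\bigr)$ and, in particular, $<\!x,x,x\!>=\|x\|^{2}x$. Applying the derivation property of $T$ with $x=y=z$ yields
\[
\|x\|^{2}\,T(x)\;=\;T<\!x,x,x\!>\;=\;3<\!T(x),x,x\!>\;=\;\|x\|^{2}\,T(x)+2\langle T(x),x\rangle\,x,
\]
so $\langle T(x),x\rangle=0$ for every $x\in E$. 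Polarising gives $\langle T(x),y\rangle=-\langle x,T(y)\rangle$, i.e.\ the everywhere-defined linear map $T$ is formally skew-adjoint on the Hilbert space $E$. The Hellinger--Toeplitz theorem (equivalently, the closed graph theorem applied to $T=-T^{*}$, whose adjoint extends $-T$ on all of $E$) then promotes this to genuine boundedness, yielding $(c)$.

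For $(c)\Rightarrow(d)$ the verification is a direct expansion: one writes out $\J{T(x)}yz+\J x{T(y)}z+\J xy{T(z)}$ using the explicit formula for $\J xyz$, collects coefficients of $x$, $z$, $T(x)$ and $T(z)$, and uses $\langle T(x),y\rangle+\langle x,T(y)\rangle=0$ to kill the terms whose scalar coefficients involve $T$, leaving precisely $T\J xyz=\frac{1}{2}\bigl(\langle x,y\rangle T(z)+\langle z,y\rangle T(x)\bigr)$. The main obstacle I anticipate lies in $(b)\Rightarrow(c)$: the hypothesis $(b)$ only delivers a real-linear derivation of $<\!\cdot,\cdot,\cdot\!>$ and gives no a priori control on continuity, so the crucial ingredient is Hellinger--Toeplitz, which upgrades the algebraic skew-symmetry obtained from the $(x,x,x)$ computation to the bounded skew-adjoint relation $T^{*}=-T$ on $E$.
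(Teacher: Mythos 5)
Your proof is correct and follows essentially the same route as the paper: the central implication $(b)\Rightarrow(c)$ is obtained by exactly the same computation with $\J xxx$ yielding $\langle T(x)/x\rangle=0$ and hence skew-symmetry, and $(a)\Rightarrow(b)$ and $(d)\Rightarrow(a)$ are handled identically. The only divergences are minor and in your favour: the paper cites \cite[Lemma 3, Section 3.3]{HoMarPeRu} for $(c)\Leftrightarrow(d)$ where you expand the triple product directly, and your explicit appeal to Hellinger--Toeplitz to upgrade the everywhere-defined skew-symmetric map to a \emph{bounded} operator addresses a point the paper leaves implicit, which is worth making since hypothesis $(b)$ carries no a priori continuity.
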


\begin{proof} We recall that $E$ is a real Hilbert space and the triple product of $E$ is given by  $$\{x,y,z\}=\frac{1}{2}(<x/y>z+<z/y>x),$$ where $<\cdot/\cdot>$ is the inner product on $E$.\smallskip

The implication $(a)\Rightarrow (b)$ is proved in \cite[Corollary 2.5]{BurFerPe2013}. The equivalence $(c)\Leftrightarrow (d)$ was established in \cite[Lemma 3, Section 3.3]{HoMarPeRu}, while $(d)\Rightarrow (a)$ is obvious.\smallskip

We shall prove $(b)\Rightarrow (c)$. Let $T$ be a derivation for the symmetrized triple product $<\cdot,\cdot,\cdot >$. For each $x \in  E$, we have that  $T\{x,x,x\}=2\{Tx,x,x\}+\{x,Tx,x\}$ and hence $$\Vert x \Vert ^2 T(x) =<T(x)/x> x+\Vert x \Vert ^2 T(x)+ <x/T(x)> x,$$ which gives, $$
     <T(x)/x>=-<x/T(x)>, \hspace{0.6cm} \forall x \in E,$$ and hence $T^* = -T.$
\end{proof}

We deal now with rank-one real spin factors.

\begin{lemma}\label{l caseofspinfactor} Let $E$ be a real spin factor of rank one and let $T: E \rightarrow E$ be a (real) linear mapping. The following statements are equivalent:
\begin{enumerate}[$(a)$] \item $T$ is a local triple derivation;
\item $T$ is a triple derivation for the symmetrized triple product;
\item $T$ is a bounded skew-symmetric operator ($T^*=-T$);
\item $T$ is a triple derivation.
\end{enumerate}
\end{lemma}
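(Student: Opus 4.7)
The plan is to follow the four-arrow strategy used in Lemma~\ref{l realtypeI}. The implications $(a)\Rightarrow(b)$ and $(d)\Rightarrow(a)$ come for free (the first from \cite[Corollary~2.5]{BurFerPe2013}, the second by definition of a local triple derivation), so the only genuine work lies in $(b)\Rightarrow(c)$ and $(c)\Rightarrow(d)$.

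First I pin down what the rank-one hypothesis forces on a real spin factor $E=IV^{r,s}$. Looking at vectors of the form $u=\frac12(e+f)$ and $v=\frac12(e-f)$, where $e\in X_1$, $f\in X_2$ are unit, a short computation shows that $u$ and $v$ are mutually orthogonal minimal tripotents whenever both $X_1$ and $X_2$ are non-trivial. Hence rank one forces $E=IV^{r,0}$ or $E=IV^{0,s}$. In either case the involution $x\mapsto \bar x$ acts as $+\mathrm{id}$ or $-\mathrm{id}$, the JB$^*$-triple norm agrees with the underlying real-Hilbert norm, and the triple product collapses to
\[
\J xyz = \langle x/y\rangle z + \langle z/y\rangle x - \langle x/z\rangle y.
\]
This structural reduction is the one piece I consider slightly subtle; once in hand, the remaining implications become bookkeeping.

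For $(b)\Rightarrow(c)$ I substitute this simplified triple product into the identity $T\J xxx = 2\J{T(x)}xx + \J x{T(x)}x$ supplied by \cite[Corollary~2.5]{BurFerPe2013}. Since $\J xxx=\|x\|^2 x$ and $\J{T(x)}xx=\|x\|^2 T(x)$, the expansion of $\J x{T(x)}x$ cancels against the left-hand side and leaves
\[
0 = 2\langle x/T(x)\rangle\, x,
\]
so $\langle x/T(x)\rangle=0$ for every $x\in E$. Polarising in the two arguments gives $\langle T(x)/y\rangle + \langle x/T(y)\rangle = 0$, i.e.\ $T^*=-T$, and boundedness is then automatic by Hellinger--Toeplitz.

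For $(c)\Rightarrow(d)$ I expand $\J{T(x)}yz + \J x{T(y)}z + \J xy{T(z)}$ term by term with the simplified product. Skew-symmetry of $T$ groups the coefficients of $z$, $x$ and $y$ into cyclic pairs of the form $\langle T(x)/y\rangle + \langle x/T(y)\rangle$, each of which vanishes, while the three surviving scalar-vector terms reassemble exactly as $T\J xyz$. The main obstacle of the proof is therefore the classification step that eliminates the generic involution; after that everything is a routine inner-product manipulation parallel to the one in Lemma~\ref{l realtypeI}.
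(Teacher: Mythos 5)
Your proof is correct and follows the same overall strategy as the paper: the cycle $(a)\Rightarrow(b)\Rightarrow(c)\Rightarrow(d)\Rightarrow(a)$, with the identity $T\J xxx = 2\J{T(x)}xx+\J x{T(x)}x$ yielding $\langle x/T(x)\rangle=0$ exactly as in the paper's $(b)\Rightarrow(c)$. The only difference is that you prove directly two steps the paper delegates to citations: the reduction of a rank-one spin factor to the involution-free triple product $\J xyz=\langle x/y\rangle z+\langle z/y\rangle x-\langle x/z\rangle y$ (the paper invokes Kaup \cite{Ka97}, you exhibit the orthogonal tripotents $\frac12(e\pm f)$ to rule out $X_1,X_2$ both nontrivial), and the implication $(c)\Rightarrow(d)$ (the paper cites \cite[Section 3.2]{HoMarPeRu}, you verify Leibniz's rule by expanding and cancelling the skew-symmetric pairings). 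Both of your substitute arguments check out, and your explicit polarisation plus Hellinger--Toeplitz remark makes the boundedness in $(c)$ cleaner than the paper leaves it; the trade-off is only length versus self-containedness.
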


\begin{proof}
We recall that $E$ is a real Hilbert space with inner product $<\cdot/\cdot>$, whose triple product is defined by $\{x,y,z\}=<x/y>z+<z/y>x-<x/z>y$ (compare \cite[Theorem 4.1 and Proposition  5.4]{Ka97}).\smallskip

The arguments given in the proof of Lemma \ref{l realtypeI} remain valid to prove the implications $(a)\Rightarrow (b)$, $(d)\Rightarrow (a)$, while $(c)\Leftrightarrow (d)$ was essentially obtained in \cite[Section 3.2]{HoMarPeRu}.\smallskip

$(b)\Rightarrow (c)$ If $T$ is a triple derivation for the symmetrized triple product $<\cdot,\cdot,\cdot >$, then $$T\{x,x,x \}=2 \{T(x),x,x \}+\{x,T(x),x\},$$ $$\Vert x \Vert ^2 T(x)=2<T(x)/x>x+2 \Vert x \Vert ^2 T(x) -2<T(x)/x>x$$ $$ +2<x/T(x)>x-\Vert x \Vert ^2 T(x),$$ and hence $ <x/T(x)>=0,$ for all $x \in E$, which concludes the proof.
\end{proof}

Generalized real Cartan factors of rank one treated in the above Lemmas \ref{l caseofspinfactor} and \ref{l realtypeI} are essentially real Hilbert spaces and every local triple derivation on them is a triple derivation. Unfortunately, there exist other examples of rank-one real JB$^*$-triples having an essentially ``complex'' or ``quaternionic'' nature. The next example shows the existence of rank-one Cartan factors $C$ having a complex structure such that there exists a local triple derivation on $C$ which is not a triple derivation.

\begin{example}\label{example local not derivation} Let $H= \mathbb{C}^2$ be the 2-dimensional complex Hilbert space equipped with its natural inner product $<(\lambda_1,\lambda_2) | (\mu_1,\mu_2)> = \lambda_1 \overline{\mu_1} + \lambda_2 \overline{\mu_2}$. We equip $H$ with its structure of (rank-one) complex Cartan factor of type $I^{\mathbb{C}}$, that is, $$2 \{ \lambda, \mu, \nu \} = <\lambda| \mu> \nu  + <\nu | \mu> \lambda.$$ Of course, we consider the real JB$^*$-triple obtained from $H$ by restricting scalar multiplication to the real numbers.\smallskip

Let $T: H = \mathbb{R}^{4} \to H = \mathbb{R}^{4}$ be the real linear mapping whose associate matrix, in the canonical basis, is given by $\displaystyle\left(
                         \begin{array}{cccc}
                           0 & 0 & 1 & 0 \\
                           0 & 0 & 0 & 0 \\
                           -1 & 0 & 0 & 0 \\
                           0 & 0 & 0 & 0 \\
                         \end{array}
                       \right)$, equivalently, $$T(\lambda_1,\lambda_2)= \left( \Re\hbox{e} (\lambda_2), - \Re\hbox{e} (\lambda_1) \right).$$ Clearly, $T$ is only $\mathbb{R}$-linear. It is not hard to check that $\Re\hbox{e} < T(\lambda_1,\lambda_2)| (\lambda_1,\lambda_2)>=0,$ and thus,
                       $$2\{T(\lambda_1,\lambda_2), (\lambda_1,\lambda_2), (\lambda_1,\lambda_2) \} + \{(\lambda_1,\lambda_2), T(\lambda_1,\lambda_2), (\lambda_1,\lambda_2) \} $$ $$ =< T(\lambda_1,\lambda_2)| (\lambda_1,\lambda_2)> (\lambda_1,\lambda_2) + (|\lambda_1|^2 +|\lambda_2|^2) T(\lambda_1,\lambda_2)$$ $$ + < (\lambda_1,\lambda_2)| T(\lambda_1,\lambda_2)> (\lambda_1,\lambda_2) $$
$$ =  (|\lambda_1|^2 +|\lambda_2|^2) T(\lambda_1,\lambda_2) + 2 \Re\hbox{e} < T(\lambda_1,\lambda_2)| (\lambda_1,\lambda_2)> (\lambda_1,\lambda_2)$$
$$ = (|\lambda_1|^2 +|\lambda_2|^2) T(\lambda_1,\lambda_2) = T\{(\lambda_1,\lambda_2), (\lambda_1,\lambda_2), (\lambda_1,\lambda_2) \},$$ which shows that $T\J xxx = 2 \J {T(x)}xx + \J x{T(x)}x$, for every $x\in H$. A priori, this is not enough to guarantee that $T$ is a local derivation. However, Proposition \ref{p local derivations in rank-one cartan factors} assures that $T$ is a local triple derivation.\smallskip

On the other hand, since  $$T\{(1,0),(i,0),(1,0) \}=(0,0)$$ and $$ 2\{T(1,0),(i,0),(1,0)\} + \{(1,0),T(i,0),(1,0) \} = (0,i),$$
we deduce that $T$ is not a triple derivation.
\end{example}

Motivated by an example given by R.V. Kadison in \cite{Kad90}, M. Mackey provides an example of a local triple derivation on the $^*$-algebra $\mathbb{C}(x),$ of rational functions in the real variable $x$ over $\mathbb{C}$, which is not a triple derivation. The particularity of the real setting provides, in Example \ref{example local not derivation}, a new example of a real linear local triple derivation on a finite dimensional (complex) JB$^*$-triple which is not a triple derivation.\smallskip

We shall give a shorter argument to justify the statements in the above Example \ref{example local not derivation}.
The following observation will be very useful.

\begin{proposition}\label{p derivations are complex linear}
Let $E$ be a complex JB$^*$-triple. Every real linear triple derivation $\delta : E \to E$ is complex linear.
\end{proposition}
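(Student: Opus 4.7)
The plan is to test complex linearity of $\delta$ by measuring its failure via the real linear map
\[
 D : E \to E, \qquad D(a) := \delta(i a) - i\,\delta(a),
\]
and to show that $D\equiv 0$. A direct computation using only the real linearity of $\delta$ shows that $D$ is in fact \emph{conjugate} linear: indeed,
\[
 D(i a) \;=\; \delta(-a) - i\,\delta(i a) \;=\; -\delta(a) - i\,\delta(i a) \;=\; -i\,D(a).
\]

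The next step is to promote $D$ to a ``partial'' derivation of the triple product. Comparing $\delta\J{ia}bc$ with $i\,\delta\J abc$ via the Leibniz identity \eqref{eq Leibnitz triple} and using that $\J...$ is $\CC$-linear in the outer variables, the contributions from the middle and last slots cancel, leaving
\[
 D\J abc \;=\; \J{D(a)}bc.
\]
Since $\J abc = \J cba$, applying the same identity to $\J cba$ and then using $\CC$-linearity in the last variable gives $D\J abc = \J ab{D(c)}$, so that
\[
 \J{D(a)}bc \;=\; \J ab{D(c)}, \qquad a,b,c\in E.
\]

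The decisive point is then a linearity-versus-antilinearity clash: the left-hand side of the last identity is $\CC$-linear in $c$, while the right-hand side is $\CC$-antilinear in $c$ (because $D$ is conjugate linear and $\J ab\cdot$ is $\CC$-linear). Replacing $c$ by $\lambda c$ with $\lambda\in\CC$ and comparing both sides forces $(\lambda-\overline{\lambda})\,\J{D(a)}bc=0$, hence $\J{D(a)}bc=0$ for all $a,b,c\in E$. Specialising $b=c=D(a)$ yields $\J{D(a)}{D(a)}{D(a)}=0$, and the JB$^*$-triple axiom $\|\J xxx\|=\|x\|^3$ now gives $D(a)=0$ for every $a\in E$. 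Therefore $\delta(i a)=i\,\delta(a)$ for all $a$, and, combined with real linearity, this shows that $\delta$ is complex linear. I do not foresee a serious technical obstacle in this plan; the only real content is recognising that the identity $\J{D(a)}bc=\J ab{D(c)}$ is \emph{overdetermined} in the complex case, because $\CC$-linearity on one side against conjugate linearity on the other forces both sides to vanish. It is precisely this step that has no analogue in the real JB$^*$-triple setting, consistent with Example~\ref{example local not derivation}.
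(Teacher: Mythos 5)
Your proof is correct, and it takes a genuinely different route from the paper. The paper's argument is soft: it observes that every inner derivation $\delta(a,b)=L(a,b)-L(b,a)$ is $\CC$-linear (because $L(a,b)$ is), invokes the inner approximation property of JB$^*$-triples to approximate $\delta$ in the strong operator topology by inner derivations, and concludes $\|i\delta(x)-\delta(ix)\|<\varepsilon$ for every $\varepsilon$. Your argument is purely algebraic and self-contained: the defect $D(a)=\delta(ia)-i\delta(a)$ is conjugate linear, the Leibniz rule together with $\CC$-linearity in the outer slots gives $D\J abc=\J {D(a)}bc$, outer symmetry gives $D\J abc=\J ab{D(c)}$, and the linear-versus-antilinear clash in $c$ forces $\J {D(a)}bc=0$, whence $D(a)=0$ by the axiom $\|\J xxx\|=\|x\|^{3}$. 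What the paper's route buys is brevity at the cost of citing the nontrivial IAP theorem of Barton--Friedman and Ho--Mart{\'\i}nez--Peralta--Russo; what your route buys is independence from that machinery (and from any continuity or approximation considerations), using only the triple identities and the norm axiom. Both proofs are valid, and your closing remark correctly identifies why the argument has no real analogue: the clash exploits the genuinely complex scalar $i$ in the outer variables.
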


\begin{proof} Having in mind that, for each $a,b$ in $E$, $L(a,b): E \to E$ is
$\CC-$linear, every (real linear) inner derivation on $E$ is $\CC-$linear.\smallskip

Suppose now that $\delta: E\to E$ is a real linear derivation.  Since every real JB$^*$-triple satisfies IAP, given $\varepsilon > 0$ and $x\in E$, there exists a inner derivation $ \widehat{\delta} $ such that $\Vert \delta(x)-\widehat{\delta}(x) \Vert< \frac{\varepsilon}{2} $ and $\Vert \delta(ix)-\widehat{\delta}(ix) \Vert<
   \frac{\varepsilon}{2} $. Therefore $$\Vert i\delta(x)-\delta(ix) \Vert \leq \Vert i\delta(x)-i\widehat{\delta}(x) \Vert + \Vert \widehat{\delta}(ix)-\delta(ix) \Vert < \varepsilon.$$ The arbitrariness of $\varepsilon$ and $x$ guarantee the desired statement.
\end{proof}

Since the mapping $T$ defined in Example \ref{example local not derivation} is defined on a complex JB$^*$-triple but it is not complex linear, it follows from the above proposition that $T$ is not triple derivation. 

After dealing with generalized real Cartan factors of rank one, we shall show that local triple derivations on generalized real Cartan factors of rank $>1$ are triple derivations.

\begin{proposition}\label{p Cartan factors of rank bigger than one} Let $C$ be a generalized real Cartan factor of rank $>1$ and let $T: C \to C$ be a linear map. The following are equivalent:\begin{enumerate}[$(a)$]
\item $T$ is a triple derivation;
\item $T$ is a local triple derivation;
\item $T$ is a triple derivation of the symmetrized triple product.
\end{enumerate}
\end{proposition}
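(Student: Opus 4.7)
The implications $(a)\Rightarrow(b)$ and $(b)\Rightarrow(c)$ are the routine direction. The first is tautological, taking $\delta_x:=T$ for every $x$, while the second was already observed at the beginning of this section: Corollary 2.5 of \cite{BurFerPe2013} gives $T\{x,x,x\}=2\{T(x),x,x\}+\{x,T(x),x\}$ for each $x$, and since the symmetrised product $<\!\cdot,\cdot,\cdot\!>$ is $\mathbb{R}$-trilinear and totally symmetric, the identity $T<\!x,x,x\!>=3<\!T(x),x,x\!>$ polarises (over $\mathbb{R}$) to the Leibniz rule for $<\!\cdot,\cdot,\cdot\!>$.

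The substance of the proposition is $(c)\Rightarrow(a)$. The plan is to introduce the deficiency
\[
\Delta(a,b,c):=T\{a,b,c\}-\{T(a),b,c\}-\{a,T(b),c\}-\{a,b,T(c)\},
\]
an $\mathbb{R}$-trilinear form, symmetric in its outer arguments $a,c$. Hypothesis $(c)$ is exactly the vanishing of its cyclic symmetrisation
\[
\Delta(a,b,c)+\Delta(b,c,a)+\Delta(c,a,b)=0,
\]
and the task is to upgrade this to $\Delta\equiv 0$.

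The tool supplied by the assumption $r(C)>1$ is a plentiful supply of orthogonal tripotents. For any pair of orthogonal tripotents $e\perp f$ in $C$, the vanishing $L(e,f)=L(f,e)=0$ annihilates two of the three cyclic terms defining $<\!e,x,f\!>$, so that
\[
<\!e,x,f\!>=\tfrac{1}{3}\{e,x,f\}\qquad (x\in C).
\]
Feeding this collapse into the polarised form of $(c)$ turns the cyclic Leibniz rule into an honest Leibniz rule for $\{e,x,f\}$, up to correction terms that measure the failure of $T(e)$ to remain orthogonal to $f$ (and symmetrically for $T(f)$). These corrections are controlled via \eqref{eq local triple derivations on tripotents}, which places $T(e)\in E_1(e)\oplus E_2(e)$ with prescribed Peirce-$2$ part $-Q(e)T(e)$, together with the Peirce arithmetic at the tripotent $e+f$. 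The outcome is $\Delta(e,x,f)=0$ for every orthogonal pair $e,f$ of tripotents and every $x\in C$. A continuity/density step---using that $T$ is continuous (the real-setting result recalled in the Introduction) and that every element of $C$ is sufficiently well approximated by real-linear combinations of mutually orthogonal tripotents, possibly after passing to $C^{**}$ and invoking separate weak$^*$-continuity of the triple product---then propagates $\Delta\equiv 0$ to arbitrary triples.

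The principal obstacle is the case of a complex Cartan factor regarded as a real JB$^*$-triple. Example \ref{example local not derivation} shows that $(c)\Rightarrow(a)$ genuinely fails when the rank is one, so the argument must essentially exploit rank $\geq 2$. In parallel with Proposition \ref{p derivations are complex linear}, one expects that rank $\geq 2$ together with hypothesis $(c)$ forces $T$ to be $\CC$-linear on a suitable dense subspace, after which the complex polarisation formula recalled in the Introduction instantly yields $(a)$. Pinning down precisely which additional input, coming from the rank $\geq 2$ Peirce structure, rules out the rank-one obstruction of Example \ref{example local not derivation} is the delicate heart of the argument.
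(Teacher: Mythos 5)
The two easy implications are fine, but the implication $(c)\Rightarrow(a)$ --- the only one with content --- is not actually proved in your proposal; it is a programme, and you concede as much in the last sentence. Concretely, the orthogonality collapse $<\!e,x,f\!>=\tfrac13\{e,x,f\}$ for $e\perp f$ only gives you information about the \emph{off-diagonal} values $\Delta(e,x,f)$ with $e$ and $f$ orthogonal. Even granting the (unverified) Peirce computation that makes these vanish, and the approximation of arbitrary elements by real-linear combinations of mutually orthogonal tripotents, trilinearity reduces $\Delta(a,b,c)$ to a sum of terms $\Delta(e_i,b,e_j)$ over a common orthogonal family, and the diagonal terms $\Delta(e,b,e)$ are untouched by your argument: orthogonality gives nothing when the two outer tripotents coincide. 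This is exactly where the rank-one counterexample lives --- in Example \ref{example local not derivation} the failure is $\Delta\bigl((1,0),(i,0),(1,0)\bigr)=(0,i)\neq 0$, a diagonal term at a single minimal tripotent. So the step you defer (``which additional input from the rank $\geq 2$ Peirce structure rules out the rank-one obstruction'') is not a detail to be pinned down later; it is the entire theorem, and nothing in the sketch supplies it.

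For comparison, the paper's proof of $(c)\Rightarrow(a)$ avoids all infinitesimal Peirce analysis by integrating: since $T$ is a derivation of the symmetrized product, $\exp(tT)$ is for each real $t$ an automorphism of the symmetrized product, hence by \cite[Theorem 4.8]{IsKaRo95} a surjective linear isometry of $C$; because $C$ has rank $>1$, \cite[Corollary 2.15]{FerMarPe} upgrades each $\exp(tT)$ to an automorphism of the \emph{original} triple product, and differentiating $\exp(tT)\J xyz=\J{\exp(tT)x}{\exp(tT)y}{\exp(tT)z}$ at $t=0$ yields the Leibniz rule. The rank hypothesis thus enters through a known global rigidity theorem for surjective isometries rather than through local computations, which is precisely the ingredient your outline is missing. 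If you want to salvage your approach you would need, at minimum, an argument handling $\Delta(e,x,e)$ that genuinely uses the existence of a tripotent orthogonal to $e$; no such argument is present.
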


\begin{proof} The implication $(b)\Rightarrow (c)$ is a consequence of \cite[Corollary 2.5]{BurFerPe2013}, while $(a)\Rightarrow (b)$ is clear. To prove $(c)\Rightarrow (a)$, let $T: C\to C$ be a triple derivation of the symmetrized triple product $<. \ ,.\ ,.>$. It is well known in real JB$^*$-triple theory that $\{\exp (t T) : C \to C\}_{t\in \mathbb{R}}$ is a one-parameter group of automorphisms of the symmetrized triple product. By \cite[Theorem 4.8 ]{IsKaRo95}, $\exp (t T)$ is a surjective isometry for every real $t$. Since $C$ is of rank $>1$, Corollary 2.15 in \cite{FerMarPe} assures that $\exp(t T)$ is a triple automorphism of the original triple product and hence $$\exp(t T) \J xyz = \J {\exp(t T) (x)}{\exp(t T) (y)}{\exp(t T) (z)},$$ for every $x,y,z\in C$ and $t\in \mathbb{R}.$ Finally, the identity $$\frac{\partial}{\partial t}{\mid_{_{t=0}}} \left(\exp(t T) \J xyz \right)=\frac{\partial}{\partial t}{\mid_{_{t=0}}} \left( \J {\exp(t T) (x)}{\exp(t T) (y)}{\exp(t T) (z)}\right),$$ gives $T\J xyz = \J {T(x)}yz + \J x{T(y)}z + \J xy{T(z)}.$
\end{proof}

Combining Propositions \ref{p local derivations in rank-one cartan factors} and \ref{p Cartan factors of rank bigger than one} we can also deduce that local triple derivations and triple derivations of the symmetrized triple product on a generalized real Cartan factor define the same objects.

\section{Local triple derivations on general real JB$^*$-triples}

In this section we shall find sufficient conditions on a general real JB$^*$-triple to guarantee that every local triple derivation on it is a triple derivation. We begin stating several properties of local triple derivations.\smallskip

We recall that a subspace $I$ of a real JB$^*$-triple $E$ is a
\emph{triple ideal} 
if $\{E,E,I\}+\{E,I,E\} \subseteq I$. 
It is known that a subtriple $I$ of $E$ is a triple ideal if and only if $\J EEI \subseteq I$ or $\J EIE \subseteq I$ or $\J EII\subseteq I$ (compare \cite{Bun86}).

\begin{lemma}\label{l local triple derivations and ideals} Let $T: E\to E$ be a triple derivation of the symmetrized triple product on a real JB$^*$-triple and let $I$ be a triple ideal of $E$. Then  $T (I) \subseteq I$.
\end{lemma}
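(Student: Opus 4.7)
The plan is to exploit the one concrete identity that a derivation $T$ of the symmetrized triple product automatically satisfies, namely
\[
T\{a,a,a\}=2\{T(a),a,a\}+\{a,T(a),a\},
\]
which drops out of $T<a,a,a>=3<T(a),a,a>$ after using the outer symmetry $\{x,y,z\}=\{z,y,x\}$. The right-hand side already lies in $I$ as soon as the ``slot'' other than $T(a)$ lies in $I$, by the triple-ideal property $\{E,E,I\}+\{E,I,E\}\subseteq I$. Consequently, if I can represent a given $a\in I$ as a triple cube $a=\{b,b,b\}$ with $b$ \emph{itself in $I$}, then applying the displayed identity with $a$ replaced by $b$ gives $T(a)=T\{b,b,b\}=2\{T(b),b,b\}+\{b,T(b),b\}\in I$, and the lemma follows.

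To produce such a $b$ I would invoke the functional representation recalled in Subsection~1.1: the JB$^*$-subtriple $E_a$ generated by $a$ is triple-isomorphic to $C_0(L,\RR)$ for some locally compact $L\subseteq(0,\|a\|]$ with $L\cup\{0\}$ compact, and under this isomorphism $a$ corresponds to the coordinate function $t\mapsto t$. The element $b\in E_a$ whose image is $t\mapsto t^{1/3}$ (continuous on $L$ and vanishing at the point at infinity $0$, hence belonging to $C_0(L,\RR)$) then satisfies $\{b,b,b\}=a$, since the triple product in $C_0(L,\RR)$ is simply $\{f,g,h\}=fgh$.

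The real content lies in the claim $b\in I$. Because $\{I,I,I\}\subseteq\{E,E,I\}\subseteq I$, the triple ideal $I$ is itself a subtriple of $E$; reading ``triple ideal'' in the standing convention as norm-closed, $I\cap E_a$ is then a closed subtriple of $E_a$ that contains $a$. But $E_a$ is by definition the closed subtriple of $E$ generated by $a$, so $I\cap E_a=E_a$, and in particular $b\in I$. Combined with the identity at the top, this proves the lemma. The only obstacle in this strategy is the appeal to norm-closedness of $I$: for a merely algebraic triple ideal the cube-root step fails, and one would instead need the automatic continuity of $T$ together with the observation that each such $b$ is a norm-limit of odd-polynomial triple expressions in $a$, all of which manifestly lie in $I$ by iterated triple-ideal manipulation.
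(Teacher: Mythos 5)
Your proof is correct and follows essentially the same route as the paper: extract a cube root $b$ of $a$ inside the subtriple $E_a\subseteq I$ and apply the identity $T\{b,b,b\}=2\{T(b),b,b\}+\{b,T(b),b\}$ together with the ideal property. The paper states this in two lines; you have merely filled in the details (the $C_0(L,\RR)$ representation giving $t\mapsto t^{1/3}$, and the closedness point), all of which are accurate.
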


\begin{proof} Given $a\in I,$ we can find $b$ in the JB$^*$-subtriple of $E$ generated by $a$ such that $\J bbb =a$ (notice that, in particular, $b$ lies in $I$). Since $T(a) = T\J bbb = 2 \J {T(b)}bb +  \J b{T(b)}b \in I,$ we get the desired statement.
\end{proof}

Since the bidual, $E^{**}$, of a real JB$^*$-triple $E$ is a JBW$^*$-triple (see \cite{IsKaRo95}), the following lemma, which is a variant of \cite[Proposition 2.1]{HoPerRu}, follows from the separate weak$^*$-continuity of the triple product of $E^{**}$ (cf. \cite{MarPe}) and the weak$^*$-density of $E$ in $E^{**}$.

\begin{lemma}\label{p bidual extensions}
Let $E$ be a real JB$^*$-triple and let $T : E \to E$ be a triple derivation of the symmetrized triple product. Then $T^{**} : E^{**} \to E^{**}$
is a weak$^*$-continuous triple derivation of the symmetrized triple product.$\hfill\Box$
\end{lemma}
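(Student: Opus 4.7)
My plan is to deduce the identity for $T^{**}$ on $E^{**}$ from the one already known for $T$ on $E$ by a standard weak$^*$-density argument carried out one variable at a time.

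First I would set up the basic ingredients. Since $T$ is (or may be taken to be) continuous, its bitranspose $T^{**}\colon E^{**}\to E^{**}$ is a well-defined bounded linear operator which is automatically weak$^*$-to-weak$^*$ continuous. By \cite{IsKaRo95} the bidual $E^{**}$ is a (real) JBW$^*$-triple, and by \cite{MarPe} its triple product is separately weak$^*$-continuous; the symmetrized triple product $<\!\cdot,\cdot,\cdot\!>$ inherits this separate continuity, since it is the sum of three cyclic rotations of the triple product $\{\cdot,\cdot,\cdot\}$. Finally, recall that $E$ is weak$^*$-dense in $E^{**}$ by Goldstine's theorem.

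For $a,b,c\in E$ the hypothesis on $T$ gives
$$T<\!a,b,c\!> \; = \; <\!T(a),b,c\!> + <\!a,T(b),c\!> + <\!a,b,T(c)\!>.$$
Fixing $b,c\in E$, both sides of this identity, with $T$ replaced by $T^{**}$, depend weak$^*$-continuously on $a\in E^{**}$: the left side because $T^{**}$ is weak$^*$-continuous and $<\!\cdot,b,c\!>$ is weak$^*$-continuous in its first slot, and the right side for analogous reasons. Since they agree on the weak$^*$-dense subset $E$, they coincide on all of $E^{**}$. I would then iterate the same argument, first fixing $a\in E^{**}$ and $c\in E$ and varying $b\in E^{**}$, and finally fixing $a,b\in E^{**}$ and varying $c\in E^{**}$, obtaining the full Leibniz identity
$$T^{**}<\!a,b,c\!> \; = \; <\!T^{**}(a),b,c\!> + <\!a,T^{**}(b),c\!> + <\!a,b,T^{**}(c)\!>$$
for all $a,b,c\in E^{**}$.

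The only point requiring care is the boundedness of $T$, since the bitranspose only makes sense for a continuous operator. Once continuity is granted (either as a standing hypothesis or via the usual automatic-continuity machinery for derivations on real JB$^*$-triples), the extension to the bidual is routine weak$^*$-density bookkeeping, and the weak$^*$-continuity of $T^{**}$ is free.
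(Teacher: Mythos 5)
Your argument is correct and is essentially the paper's own: the authors likewise derive the lemma from the weak$^*$-continuity of $T^{**}$, the separate weak$^*$-continuity of the triple product of the JBW$^*$-triple $E^{**}$ (citing \cite{IsKaRo95} and \cite{MarPe}), and the weak$^*$-density of $E$ in $E^{**}$, calling it a variant of \cite[Proposition 2.1]{HoPerRu}. Your explicit one-variable-at-a-time iteration and the remark on the boundedness of $T$ (which in the paper's application is guaranteed by the automatic continuity of local triple derivations) simply spell out what the paper leaves implicit.
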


The atomic decomposition established in \cite[Theorem 3.6]{PeSta} assures that every JBW$^*$-triple $W$ decomposes as an orthogonal sum
$$W = A \oplus^{\infty} N,$$ where $A$ and $N$ are
weak$^*$-closed ideals of $W$, $A$ being the weak$^*$-closed real linear
span of all minimal tripotents in $W$, $N$ containing no
\hyphenation{mi-nimal} minimal tripotents and $A \perp N$. It is also proved in \cite[Theorem 3.6]{PeSta} that $A$ is an
 orthogonal sum of weak$^*$-closed triple ideals which are generalized real Cartan factors.\smallskip

The tools required in our arguments will need the following Gelfand-Naimark type theorem for real
JB$^*$-triples, which has been borrowed from \cite[Proposition 3.1]{FerMarPe}.

\begin{proposition}\label{GN}
Let $E$ be a real JB*-triple. 
Let $A$ denote the atomic part of $E^{**},$ $j : E \hookrightarrow E^{**}$ the canonical embedding, and $\pi
: E^{**} \to A$ the canonical projection of $E^{**}$ onto $A$. Then $A$ writes as an
 orthogonal sum of weak$^*$-closed triple ideals which are generalized real Cartan factors, and the
mapping $\pi \circ j : E \to A$ is an isometric triple embedding.$\hfill\Box$
\end{proposition}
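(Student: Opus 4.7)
My plan is to split the proposition into its two assertions and handle each with the tools recalled immediately before the statement.

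For the first assertion, I would simply invoke \cite[Theorem 3.6]{PeSta}, already cited in the paragraph preceding the proposition. Since $E^{**}$ is a real JBW$^*$-triple (by \cite{IsKaRo95}), that theorem gives the atomic decomposition $E^{**} = A \oplus^{\infty} N$ with $A$ the weak$^*$-closed real linear span of the minimal tripotents of $E^{**}$, and further decomposes $A$ as an $\ell_{\infty}$-orthogonal sum of weak$^*$-closed triple ideals which are generalized real Cartan factors. Nothing new is needed here beyond quoting the theorem.

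For the second assertion, I would first observe that $\pi\circ j$ is a triple homomorphism. The embedding $j$ is an isometric triple homomorphism because the triple product of $E^{**}$ extends that of $E$ via the Dineen/Barton--Timoney-type extension (see \cite{Di86b}, \cite{IsKaRo95}, \cite{MarPe}, also recalled in the paragraph on JBW$^*$-triple preduals). Since $N$ is a weak$^*$-closed triple ideal of $E^{**}$ and $A\perp N$, the projection $\pi$ onto $A$ along $N$ is a weak$^*$-continuous triple homomorphism, so the composition $\pi\circ j$ is a real linear triple homomorphism.

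To upgrade this to an isometric embedding I would use a norm-attainment argument. Given $x\in E$ with $\|x\|=1$, pick $\phi\in E^{*}$ with $\|\phi\|=\phi(x)=1$. Viewing $\phi$ as a normal functional on the real JBW$^*$-triple $E^{**}$ via the canonical embedding, the support tripotent $e(\phi)\in E^{**}$ associated with $\phi$ (existence and basic properties being part of the Edwards--R\"uttimann/Friedman--Russo theory as adapted to the real setting in \cite{MarPe} and \cite{PeSta}) is a minimal tripotent, hence lies in $A$. Since $\phi = \phi \circ P_{2}(e(\phi))$ and $e(\phi)\in A$, the functional $\phi$ factors as $\phi = \phi|_{A}\circ \pi$, and therefore $\|\pi(j(x))\|\geq |\phi(\pi(j(x)))| = |\phi(x)| = 1 = \|x\|$. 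Since $\pi$ and $j$ are contractive, the reverse inequality is automatic, proving $\pi\circ j$ is isometric (and in particular injective).

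The main obstacle is the support-tripotent/norm-attainment step in the real setting: in the complex case this is the classical Friedman--Russo theorem, but in the real case one must replace several polarisation-based arguments by the weak$^*$-separate continuity of the triple product together with the structural description of minimal tripotents in real JBW$^*$-triples provided by \cite{PeSta}. Once this machinery is in place, the rest of the proof is essentially a packaging argument.
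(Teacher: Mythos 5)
The paper does not actually prove this proposition: it is quoted verbatim from \cite[Proposition 3.1]{FerMarPe}, and the only argument offered is the citation together with the atomic decomposition of \cite[Theorem 3.6]{PeSta}. Your overall strategy (quote the atomic decomposition for the first assertion; show $\pi\circ j$ is a triple homomorphism and then use a norm--attainment/support--tripotent argument for the isometry) is in fact the route taken in that reference, so the architecture of your proof is the right one.

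There is, however, a genuine gap in the norm--attainment step. You take an \emph{arbitrary} $\phi\in E^*$ with $\|\phi\|=\phi(x)=1$ and assert that its support tripotent $e(\phi)$ is minimal. This is false in general: minimality of the support tripotent characterises the \emph{extreme points} of the closed unit ball of the predual, not arbitrary norm--one functionals. For instance, in $E=\CC^2$ with the supremum norm, $x=(1,1)$ and $\phi=\frac12(\delta_1+\delta_2)$, one has $\phi(x)=\|\phi\|=1$ but $e(\phi)=(1,1)$, which is not a minimal tripotent; such an $e(\phi)$ need not lie in the atomic part, and your factorisation $\phi=\phi\circ P_2(e(\phi))\circ\pi$ breaks down. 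The repair is standard but must be said: the set $F=\{\psi\in E^*: \|\psi\|\leq 1,\ \psi(x)=1\}$ is a nonempty weak$^*$-compact convex face of the unit ball of $E^*$, so by the Krein--Milman theorem it has an extreme point $\phi$, which is then an extreme point of the whole unit ball of $E^*$; by the real analogue of the Friedman--Russo correspondence established in \cite{PeSta}, the support tripotent of such an extreme $\phi$ \emph{is} minimal, hence lies in $A$, and the remainder of your computation ($\phi=\phi\circ P_2(e(\phi))$, $P_2(e(\phi))$ annihilates $N$ since $e(\phi)\perp N$, whence $\|\pi(j(x))\|\geq|\phi(x)|=\|x\|$) goes through unchanged. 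With this one correction your argument is complete.
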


We are now in a position to give sufficient conditions on a real JB$^*$-triple $E$ to guarantee that every local triple derivation on $E$ is a triple derivation. We recall first, that, by \cite[Proposition 5.4]{Ka97}, every real JB$^*$-triple of rank one is precisely one of the
following rank-one generalized real Cartan factors: a rank-one type $I^{\mathbb{R}}$, $I^{\mathbb{C}}$, $I^{\mathbb{H}}$, a rank-one real spin factor $IV^{n,0}$, and $V^{\mathbb{O}}:= M_{1,2} (\mathbb{O})$.

\begin{theorem}\label{t sufficient conds on general real JB*-triples} Let $E$ be a real JB$^*$-triple whose second dual doesn't contain rank-one generalized real Cartan factors of types $I^{\mathbb{C}}$, $I^{\mathbb{H}}$ nor $V^{\mathbb{O}}:= M_{1,2} (\mathbb{O})$. Then every local triple derivation on $E$ is a triple derivation.
\end{theorem}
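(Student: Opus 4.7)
The plan is to lift $T$ to the bidual $E^{**}$, exploit the atomic decomposition to reduce the problem to generalized real Cartan factors, handle each summand via the results of Section 2, and descend back to $E$ through the Gelfand--Naimark embedding of Proposition \ref{GN}. Concretely, let $T:E\to E$ be a local triple derivation. By the automatic continuity results of \cite{BurFerPe2013}, $T$ is bounded, and by \cite[Corollary 2.5]{BurFerPe2013} (recalled at the start of Section 2) it is a derivation of the symmetrized triple product. Lemma \ref{p bidual extensions} then says that $T^{**}:E^{**}\to E^{**}$ is a weak$^*$-continuous derivation of the symmetrized triple product. Using the atomic decomposition \cite[Theorem 3.6]{PeSta}, write $E^{**} = A\oplus^{\infty}N$, where the atomic part $A$ is an orthogonal sum of weak$^*$-closed triple ideals $\{C_\alpha\}$, each one a generalized real Cartan factor. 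Because $A$ and each $C_\alpha$ are triple ideals of $E^{**}$, Lemma \ref{l local triple derivations and ideals} forces $T^{**}(C_\alpha)\subseteq C_\alpha$, so each restriction $T^{**}|_{C_\alpha}$ is a derivation of the symmetrized product on $C_\alpha$.

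The hypothesis enters through the classification in \cite[Proposition 5.4]{Ka97}: the only rank-one generalized real Cartan factors are $I^{\mathbb{R}}$, $I^{\mathbb{C}}$, $I^{\mathbb{H}}$, $IV^{n,0}$ and $V^{\mathbb{O}}$, and excluding the three forbidden types leaves only $I^{\mathbb{R}}$ and the real spin factor $IV^{n,0}$. Thus every $C_\alpha$ is either of rank strictly greater than one (in which case Proposition \ref{p Cartan factors of rank bigger than one} applies), rank-one of type $I^{\mathbb{R}}$ (Lemma \ref{l realtypeI}), or a rank-one real spin factor (Lemma \ref{l caseofspinfactor}); in every case $T^{**}|_{C_\alpha}$ is an honest triple derivation. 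These componentwise Leibniz identities glue into the Leibniz identity for $T^{**}|_A$ on the whole of $A$: triple products with entries in distinct summands vanish by orthogonality, the diagonal terms satisfy Leibniz by what has just been proved, and the weak$^*$-continuity of $T^{**}$ together with the separate weak$^*$-continuity of the triple product on the JBW$^*$-triple $A$ transport the identity from the weak$^*$-dense algebraic direct sum $\bigoplus_\alpha C_\alpha$ to all of $A$.

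Finally, Proposition \ref{GN} presents $\pi\circ j:E\to A$ as an isometric triple embedding. Since $T^{**}$ preserves the orthogonal decomposition $E^{**}=A\oplus^{\infty}N$ and extends $T$, it commutes with the projection $\pi$, giving $T^{**}|_A(\pi j(x))=\pi j(T(x))$ for every $x\in E$. Applying the now-established Leibniz identity for $T^{**}|_A$ to the triple $(\pi j(x),\pi j(y),\pi j(z))$, pulling the triple product through the triple homomorphism $\pi\circ j$, and using its injectivity, yields $T\{x,y,z\}=\{T(x),y,z\}+\{x,T(y),z\}+\{x,y,T(z)\}$ for all $x,y,z\in E$. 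The main difficulty is the classification step: the hypothesis is engineered exactly so that, once the three rank-one offenders are outlawed, every summand of the atomic part is within the reach of Section 2, and the bidual/atomic apparatus merely transmits this local information back to $E$.
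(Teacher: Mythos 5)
Your argument is correct and follows essentially the same route as the paper: pass to $T^{**}$ via \cite[Corollary 2.5]{BurFerPe2013} and Lemma \ref{p bidual extensions}, use the atomic decomposition and Lemma \ref{l local triple derivations and ideals} to restrict to each Cartan factor summand, invoke Lemmas \ref{l realtypeI}, \ref{l caseofspinfactor} and Proposition \ref{p Cartan factors of rank bigger than one} on the summands permitted by the hypothesis, and descend to $E$ through the embedding $\pi\circ j$ of Proposition \ref{GN}. If anything, you supply a gluing step (orthogonality of the summands plus separate weak$^*$-continuity) that the paper passes over in silence, so no gaps remain.
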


\begin{proof}
Let $T : E \to E$ be a local triple derivation on $E$. We have already commented that Corollary 2.5 in \cite{BurFerPe2013} assures that $T$ is a triple derivation of the symmetrized triple product. Lemma \ref{p bidual extensions} implies that $T^{**} : E^{**} \to E^{**}$ is a triple derivation of the symmetrized triple product. We have also argued that, by the atomic decomposition of $E^{**}$, there exists a family of mutually orthogonal, weak$^*$-closed triple ideals $\{C_i: i\in \Lambda\}\cup \{N\}$ of $E^{**}$ such that $$E^{**} =\left(\bigoplus_{i}^{\ell_{\infty}} C_i\right) \bigoplus^{\ell_{\infty}} N.$$ Lemma \ref{l local triple derivations and ideals} shows that $T^{**} (N) \subseteq N$ and $T^{**} (C_i) \subseteq C_i$, for every $i\in \Lambda$. By hypothesis, each $C_i$ is a generalized real Cartan factor of rank $>2$ or a rank-one generalized real Cartan factor of type $I^{\mathbb{R}}$ (i.e. $L(H,\mathbb{R})$, for a real Hilbert space $H$), or a real spin factor of rank one. Now, Lemmas \ref{l realtypeI} and \ref{l caseofspinfactor} and Proposition \ref{p Cartan factors of rank bigger than one} imply that $T^{**}|_{C_i} : C_i \to C_i$ is a triple derivation for every $i$, and hence $T^{**}|_{\bigoplus_{i}^{\ell_{\infty}} C_i} : \bigoplus_{i}^{\ell_{\infty}} C_i \to  \bigoplus_{i}^{\ell_{\infty}} C_i$ also is a triple derivation.\smallskip

Let us denote $A= \bigoplus_{i}^{\ell_{\infty}} C_i$ and keep the notation employed in Proposition \ref{GN}. Since the mapping $\Phi=\pi \circ j : E \to A$ is an isometric triple embedding, $T^{**}|_{A} : A \to A$ is a triple derivation, and $\Phi T = T^{**} \Phi$, we have $$\Phi T (\J xyz) =  T^{**}|_{A} \J {\Phi(x)}{\Phi(y)}{\Phi(z)} $$ $$= \J {T^{**}|_{A}\Phi(x)}{\Phi(y)}{\Phi(z)} + \J {\Phi(x)}{T^{**}|_{A}\Phi(y)}{\Phi(z)} + \J {\Phi(x)}{\Phi(y)}{T^{**}|_{A}\Phi(z)}$$
$$= \Phi \left(\J {T(x)}{y}{z} +  \J {x}{T(y)}{z} + \J {x}{y}{T(z)}\right),$$ witnessing that $T$ is a triple derivation.
\end{proof}

\begin{remark}\label{r Kaup rank one} The rank-one, generalized real Cartan factors avoided in Theorem \ref{t sufficient conds on general real JB*-triples} appear also in the problem of determining those real JB$^*$-triples $E$ satisfying that every surjective isometry on $E$ is a triple isomorphism. More precisely, W. Kaup established, in \cite[Lemma 5.12]{Ka97}, that a real JB$^*$-triple of rank one, $E$, satisfies that every surjective linear isometry on $E$ is triple isomorphism if and only if $E$ is isomorphic to a rank-one real Cartan factor of type $I^{\mathbb{R}}$ or a real spin factor of rank one, equivalently, $E$ is not isomorphic to one of the rank-one generalized real Cartan factors we have avoided in the statement of Theorem \ref{t sufficient conds on general real JB*-triples}.
\end{remark}

In order to get some applications of our main result, we refresh some definitions. A real JB$^*$-algebra is a norm closed self-adjoint real Jordan subalgebra of a (complex) JB$^*$-algebra. The class of J$^*$B-algebras introduced by K. Alvermann in \cite{Alv} coincides with the class of unital real JB$^*$-algebras. As in the complex setting, every real C$^*$-algebra is a real JB$^*$-algebra with respect to the Jordan product and every real JB$^*$-algebra is a real JB$^*$-triple with product $$\J abc = (a \circ b^*) \circ c + (c\circ b^*) \circ a - (a\circ c) \circ b^*$$ (see \cite{HanStor} and \cite{Li2003} for the basic background on JB- and JB$^*$-algebras and real C$^*$-algebras, respectively).\smallskip

The proof of Corollary 4.4 in \cite{FerMarPe} shows that the bidual of a real JB$^*$-algebra can only contain rank-one generalized real Cartan factors of the form $\mathbb{R}$, $\mathbb{C}$ or $IV^{n,0}$. So, the following result follows a direct consequence of Theorem \ref{t sufficient conds on general real JB*-triples}.

\begin{corollary}
Every local triple derivation on a real JB$^*$-algebra or a real C$^*$-algebra is a triple derivation.$\hfill\Box$
\end{corollary}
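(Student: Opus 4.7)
The plan is to reduce the corollary to a direct application of Theorem \ref{t sufficient conds on general real JB*-triples}. First I would note that the real C$^*$-algebra case is subsumed by the real JB$^*$-algebra case: every real C$^*$-algebra $A$ becomes a real JB$^*$-algebra under the symmetrized product $a\circ b=\frac12(ab+ba)$, and the induced triple product
$$\J abc = (a\circ b^*)\circ c + (c\circ b^*)\circ a - (a\circ c)\circ b^*$$
on this real JB$^*$-algebra coincides with the original triple product $\frac12(ab^*c+cb^*a)$ coming from the C$^*$-structure. Hence the notion of triple derivation, and of local triple derivation, on $A$ is the same whether $A$ is viewed as a real C$^*$-algebra or as a real JB$^*$-algebra, and it suffices to treat the latter.

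Next I would invoke the atomic decomposition of $E^{**}$ provided by Proposition \ref{GN} to write the atomic part of the bidual of the real JB$^*$-algebra $E$ as an $\ell_\infty$-orthogonal sum of generalized real Cartan factors. The key structural input, quoted from the proof of Corollary 4.4 in \cite{FerMarPe}, is that for a real JB$^*$-algebra the only rank-one generalized real Cartan factors that may appear in this atomic decomposition are $\mathbb{R}$ (i.e.\ a rank-one type $I^{\mathbb{R}}$ with one-dimensional Hilbert spaces), $\mathbb{C}$ (which, as a real JB$^*$-triple, coincides up to triple isomorphism with the real spin factor $IV^{2,0}$), and the real spin factors $IV^{n,0}$. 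The quaternionic, complex-operator and octonionic rank-one factors $I^{\mathbb{H}}$, $I^{\mathbb{C}}$ (of higher multiplicity), and $V^{\mathbb{O}}$ are ruled out because the Peirce-$2$ space of a minimal tripotent in a real JBW$^*$-algebra always carries a Jordan-algebraic (hence formally real or of spin type) rather than purely associative/octonionic structure.

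Once this is in hand, the hypothesis of Theorem \ref{t sufficient conds on general real JB*-triples} is verified: $E^{**}$ does not contain any rank-one generalized real Cartan factor of type $I^{\mathbb{C}}$ (in its genuine higher-rank-ambient sense), $I^{\mathbb{H}}$, or $V^{\mathbb{O}}$. The conclusion that every local triple derivation on $E$ is a triple derivation then follows immediately by applying Theorem \ref{t sufficient conds on general real JB*-triples}.

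The only non-trivial point, and the step I expect to be the main obstacle for a reader not familiar with \cite{FerMarPe}, is the structural statement about which rank-one summands can appear in the atomic part of a real JB$^*$-algebra bidual. Once this is taken as a black box, the corollary is essentially a one-line consequence of the main theorem together with the observation that the real C$^*$-algebra case reduces to the real JB$^*$-algebra case via the symmetrized Jordan product.
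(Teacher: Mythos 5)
Your proposal is correct and follows essentially the same route as the paper: reduce the real C$^*$-algebra case to the real JB$^*$-algebra case, quote from the proof of Corollary 4.4 in \cite{FerMarPe} that the atomic part of the bidual of a real JB$^*$-algebra only contains rank-one factors of the form $\mathbb{R}$, $\mathbb{C}$ or $IV^{n,0}$, and apply Theorem \ref{t sufficient conds on general real JB*-triples}. Your explicit remark that $\mathbb{C}$, viewed as a real JB$^*$-triple, is identified with a rank-one real spin factor (and hence is not one of the excluded types) is a helpful clarification that the paper leaves implicit.
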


The counter-example exhibited in Example \ref{example local not derivation} and the consequent restrictions appearing in the real setting are inviting us to explore the connections with 2-local triple derivations. In the associative setting, 2-local derivations and 2-local homomorphisms were introduced by P. \v{S}emrl in \cite{Semr}. Given a associative algebra $A$, a mapping $T : A \to A$ is called a \emph{2-local derivation} if for every $a, b\in  A$ there is an (associative) derivation $D_{a,b} : A\to A$, depending on $a$ and $b$, such that $D_{a,b}(a) = T(a)$ and $D_{a,b}(b) = T(b)$. In the just quoted paper, \v{S}emrl shows that if $H$ is an infinite-dimensional separable Hilbert space, then every 2-local derivation on $L(H)$ is a derivation of $L(H)$. \v{S}emrl's paper has motivated over forty publications during the last years. Here we extend the notion to the triple setting. Let $T: E \to E$ be a mapping on a real or complex JB$^*$-triple. We shall say that $T$ is a \emph{2-local triple derivation} when given elements $a, b\in  A$ there is a triple derivation $\delta_{a,b} : E\to E$, depending on $a$ and $b$, such that $\delta_{a,b}(a) = T(a)$ and $\delta_{a,b}(b) = T(b)$.\smallskip

Every 2-local triple derivation $T: E\to E$ is $1$-homogeneous, that is, $T(\lambda a) = \lambda T(a)$, for every $a$ in $E$ and every scalar $\lambda$. Indeed, let us take a triple derivation $\delta_{a, \lambda a}: E\to E$ such that $T(\lambda a) = \delta_{a, \lambda a} (\lambda a) = \lambda \delta_{a, \lambda a} (a) = \lambda T(a)$. The above Example \ref{example local not derivation} provides an example of a local triple derivation on a rank-one real JB$^*$-triple which is not a triple derivation. This kind of counterexample is not valid for additive 2-local triple derivation. Indeed, let $T : E \to E$ be an additive 2-local triple derivation on a real JB$^*$-triple. We have already seen that, in this case, $T$ is linear. For each derivation $\delta : E \to E$ the mapping $\widetilde{\delta}: \widetilde{E} \to \widetilde{E}$, $\widetilde{\delta} (x+i y):= \delta(x) + i \delta (y)$ defines a $\mathbb{C}$-linear triple derivation on the complexification of $E$ (compare \cite[Remark 1]{HoMarPeRu}). We also consider a $\mathbb{C}$-linear mapping on $\widetilde{E}$ defined by $\widetilde{T} (x+i y):= T(x) + i T (y)$. Let us take $a,b\in E$ and a triple derivation $\delta_{a,b}: E \to E$ satisfying $\delta_{a,b}(a) = T(a)$ and $\delta_{a,b}(b) = T(b)$. In this case, the mapping $\widetilde{\delta}_{a,b}$ is a derivation on $\widetilde{E}$ and $\widetilde{T} (a+i b) = T(a) + i T(b) = \delta_{a,b}(a)+ i \delta_{a,b}(b) = \widetilde{\delta}_{a,b} (a+i b)$. Therefore, $\widetilde{T}$ is a local triple derivation on the complex JB$^*$-triple $\widetilde{E}$, and hence Theorems 2.4 and 2.8 in \cite{BurFerPe2013} assure that $\widetilde{T}$ (and hence $T$) is a (continuous) triple derivation.

\begin{corollary}\label{cor 2-local} Let $T: E \to E$ be a (not necessarily continuous) additive 2-local triple derivation on a real JB$^*$-triple. Then $T$ is a continuous triple derivation. $\hfill\Box$
\end{corollary}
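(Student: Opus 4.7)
The plan is to reduce the real additive 2-local case to the already-settled complex local case via complexification. The argument has three moving pieces: promote $T$ from additive to $\mathbb{R}$-linear, lift it to a $\mathbb{C}$-linear map $\widetilde{T}$ on the complexification $\widetilde{E}$, and then verify that $\widetilde{T}$ is a local triple derivation so that the complex-case theorems of \cite{BurFerPe2013} apply.

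For the first step, I would observe that any 2-local triple derivation is $1$-homogeneous: given $a\in E$ and $\lambda\in\mathbb{R}$, pick a triple derivation $\delta_{a,\lambda a}$ agreeing with $T$ at both $a$ and $\lambda a$; then $T(\lambda a)=\delta_{a,\lambda a}(\lambda a)=\lambda\,\delta_{a,\lambda a}(a)=\lambda T(a)$. Combined with the assumed additivity, this forces $\mathbb{R}$-linearity of $T$. Next, write the complexification $\widetilde{E}=E\oplus iE$ regarded as a complex JB$^*$-triple (so that $E$ sits inside as a real form), and define the complex-linear extensions $\widetilde{T}(x+iy):=T(x)+iT(y)$ and, for each triple derivation $\delta$ on $E$, $\widetilde{\delta}(x+iy):=\delta(x)+i\delta(y)$. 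By \cite[Remark 1]{HoMarPeRu}, each $\widetilde{\delta}$ is a $\mathbb{C}$-linear triple derivation on $\widetilde{E}$.

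For the key step, I would check that $\widetilde{T}$ is a local triple derivation on $\widetilde{E}$. Given any $z=a+ib\in\widetilde{E}$, invoke the 2-local hypothesis to choose $\delta_{a,b}$ with $\delta_{a,b}(a)=T(a)$ and $\delta_{a,b}(b)=T(b)$; then
\[
\widetilde{\delta}_{a,b}(a+ib)=\delta_{a,b}(a)+i\,\delta_{a,b}(b)=T(a)+iT(b)=\widetilde{T}(a+ib),
\]
so at the point $z$ the map $\widetilde{T}$ is witnessed by the triple derivation $\widetilde{\delta}_{a,b}$. Now the automatic-continuity result for local triple derivations on complex JB$^*$-triples gives that $\widetilde{T}$ is continuous, and the structural theorem then upgrades $\widetilde{T}$ to a genuine (complex-linear) triple derivation on $\widetilde{E}$. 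Restricting to the real form $E$ delivers the conclusion.

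The main conceptual content lives in the complex theorems from \cite{BurFerPe2013}; the only real obstacle here is making sure the complexification machinery is set up so that the lift $\widetilde{T}$ is well-defined and $\mathbb{C}$-linear, which is precisely where additivity of $T$ is indispensable. Without additivity the construction of Example \ref{example local not derivation} shows that the statement fails even for ordinary (not just 2-local) real linear local triple derivations on rank-one complex Cartan factors, so additivity is exactly the ingredient that closes the gap exhibited in the real setting.
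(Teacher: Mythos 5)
Your proposal is correct and follows essentially the same route as the paper: establish $1$-homogeneity from the 2-local hypothesis, combine with additivity to get real linearity, complexify both $T$ and the witnessing derivations $\delta_{a,b}$, observe that $\widetilde{\delta}_{a,b}$ witnesses $\widetilde{T}$ at $a+ib$ so that $\widetilde{T}$ is a local triple derivation on $\widetilde{E}$, and then invoke Theorems 2.4 and 2.8 of \cite{BurFerPe2013}. No gaps.
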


\begin{problem}\label{problem 2-local triple derivations on real JB*-triples} Is every 2-local triple derivation on a real JB$^*$-triple additive? Equivalently, is every 2-local triple derivation on a real JB$^*$-triple a triple derivation?
\end{problem}

\bigskip

\end{document}